\newtheorem{theorem}{Theorem}[section]
\newtheorem{definition}[theorem]{Definition}
\newtheorem{proposition}[theorem]{Proposition}
\newtheorem{lemma}[theorem]{Lemma}
\newtheorem{corollary}[theorem]{Corollary}
\newtheorem{observation}[theorem]{Observation}
\newtheorem{conjecture}[theorem]{Conjecture}
\renewenvironment{proof}{\noindent\textbf{Proof:}}{\qed}
\def\mp{\operatorname{mp}}
\def\disc{\operatorname{disc}}
\title{Multipacking in Hypercubes\footnote{All the authors contributed equally.}}
\author[1]{Deepak \textbf{Rajendraprasad} \href{mailto:deepak@iitpkd.ac.in}{\faEnvelope} \orcidlink{0000-0001-9101-8967}}
\author[1]{Varun \textbf{Sani} \href{mailto:varunsani625@gmail.com}{\faEnvelope}} 
\author[1]{Birenjith \textbf{Sasidharan} \href{mailto:biren@iitpkd.ac.in}{\faEnvelope} \orcidlink{0000-0001-7444-7161}}
\author[1]{Jishnu \textbf{Sen} \href{mailto:senjishnu5@gmail.com}{\faEnvelope} \orcidlink{0000-0002-8724-9583}}
\affil[1]{Indian Institute of Technology Palakkad, Kerala, India, 678623}
\date{\today}
\begin{document}
\maketitle

\newcommand*{\prf}{\textbf{Proof of the lower bound of Theorem \ref{r5}:}}
\newcommand*{\prff}{\textbf{Proof of the upper bound of Theorem \ref{r5}:}}
\newcommand*{\prof}{\textbf{Proof of Lemma \ref{r10}:}}

\begin{abstract}
\noindent For an undirected graph $G$, a \emph{dominating broadcast} on $G$ is a function $f : V(G) \rightarrow \mathbb{N}$ such that for any vertex $u \in V(G)$, there exists a vertex $v \in V(G)$ with $f(v) \geqslant 1$ and $d(u,v) \leqslant f(v)$. The \emph{cost} of $f$ is $\sum_{v \in V} f(v)$. The minimum cost over all the dominating broadcasts on $G$ is defined as the \emph{broadcast domination number} $\gamma_b(G)$ of $G$. A \emph{multipacking} in $G$ is a subset $M \subseteq V(G)$ such that, for every vertex $v \in V(G)$ and every positive integer $r$, the number of vertices in $M$ within distance $r$ of $v$ is at most $r$. The \emph{multipacking number} of $G$, denoted $\mp(G)$, is the maximum cardinality of a multipacking in $G$. These two optimisation problems are duals of each other, and it easily follows that $\mp(G) \leqslant \gamma_b(G)$. It is known that $\gamma_b(G) \leqslant 2\mp(G)+3$ and conjectured that $\gamma_b(G) \leqslant 2\mp(G)$. 

In this paper, we show that for the $n$-dimensional hypercube $Q_n$
$$
\left\lfloor\frac{n}{2} \right\rfloor 
    \leqslant \mp(Q_n) 
    \leqslant \frac{n}{2} + 6\sqrt{2n}.
$$

Since $\gamma_b(Q_n) = n-1$ for all $n \geqslant 3$, this verifies the above conjecture on hypercubes and, more interestingly, gives a sequence of connected graphs for which the ratio $\frac{\gamma_b(G)}{\mp(G)}$ approaches $2$, a search for which was initiated by Beaudou, Brewster and Foucaud in 2018. It follows that, for connected graphs $G$
$$
    \limsup_{\mp(G) \rightarrow \infty} \left\{\frac{\gamma_b(G)}{\mp(G)}\right\} = 2.$$ 

The lower bound on $\mp(Q_n)$ is established by a recursive construction, and the upper bound is established using a classic result from discrepancy theory.
\end{abstract}

\hrule
\vspace{0.5cm}
\noindent
\textbf{Keywords:} broadcast domination, multipacking, hypercube.\\
\textbf{2010 Mathematics Subject Classification:} 05C69.

\section{Introduction}

A \emph{dominating set} in an undirected graph $G$ is a set $S \subseteq V(G)$ such that every node in $G$ is either in $S$ or adjacent to a node in $S$. One of the many motivations to study dominating sets and its variants comes from optimising the placement of facilities on the nodes of a network so that services can be easily distributed to every node in the network. In particular, if placing a facility at a node serves that node and all its neighbours, and the cost of establishing a facility is same across all the nodes, then the best strategy is to identify a smallest dominating set of the graph and place one facility on each node of this set. Suppose we can set up facilities that can serve a larger range, even at a larger cost, then we might be able to do a better distribution than the above strategy. In particular, if the cost of setting up a facility is proportional to the distance up to which it can serve, then the task becomes that of finding a dominating broadcast of minimum cost as defined next. 
\begin{definition}
    A \emph{broadcast} on a graph $G = (V, E)$ is a function $f: V \to \mathbb{N}$. The \emph{cost} of a broadcast $f$ is $\sum_{v \in V} f(v)$. A broadcast is said to be \emph{dominating} if every vertex of $G$ lies within a distance $f(v)$ of some vertex $v \in V$ with $f(v) \geqslant 1$. The \emph{broadcast domination number}, denoted $\gamma_b(G)$, is the minimum cost over all dominating broadcasts on $G$. 
\end{definition}



Trees of radius $2$ give an example of a family of graphs where $\gamma(G)$ (the size of a smallest dominating set in $G$) is unbounded but $\gamma_b(G)$ is at most $2$. The notion of broadcast domination was introduced by \citet{erwinintro} in 2001 under the name of \textit{cost domination}. It is easy to see that the broadcast domination number $\gamma_b(G)$ is bounded above by both the radius of $G$ and $\gamma(G)$. Erwin showed that $\gamma_b(G)$ is bounded below by $(\operatorname{diam}(G)+1)/3$. \citet{heggernes2006optimal} in 2006 showed, quite contrary to the usual case for domination problems, that the broadcast domination number of a graph can be determined in polynomial time. In 2013, \citet{brewster2013new} modelled broadcast domination as an Integer Linear Program (ILP), relaxed it to a Linear Program (LP), and used the ILP strengthening of the dual LP to find lower bounds on broadcast domination number for some graphs. This ILP strengthening gave the cute combinatorial problem of multipacking that we define next. Here $N_k[v]$ denotes the set of vertices which are at a distance at most $k$ from $v$. 


\begin{definition}
    For a graph $G = (V,E)$, a set $M \subseteq V$ is a  \emph{multipacking} in $G$ if for every vertex $v \in V$, $|N_k[v] \cap M| \leqslant k$ for all $k \geqslant 1$. The \emph{multipacking number} $\mp(G)$ is the maximum cardinality of a multipacking in $G$.
\end{definition}

From the LP duality, we get that $\mp(G) \leqslant \gamma_{b}(G)$ for every graph $G$. On the other hand, \citet{hartnell2014} in 2014 proved that $\gamma_{b}(G) \leqslant 3\mp(G)-2$ for any graph $G$ with $\mp(G) \geqslant 2$. \citet{beaudou2018broadcast} in 2019 improved this to $\gamma_{b}(G) \leqslant 2\mp(G) + 3$ and conjectured that the additive factor of $3$ can be removed from this bound.

\begin{conjecture}\emph{\cite{beaudou2018broadcast}} \label{r4}
    For any graph $G$, $\gamma_{b}(G) \leqslant 2\mp(G)$.
\end{conjecture}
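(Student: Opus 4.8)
The plan is to establish $\gamma_b(G)\le 2\,\mp(G)$ by a charging argument that converts an optimal dominating broadcast into a multipacking at least half as large. I would start from a minimum-cost dominating broadcast $f$ with broadcast vertices $v_1,\dots,v_t$ and powers $r_i=f(v_i)\ge 1$, so that the balls $N_{r_i}[v_i]$ cover $V(G)$ and $\gamma_b(G)=\sum_i r_i$. The aim is to build, inside these balls, a set $M$ with $|M|\ge\tfrac12\sum_i r_i=\tfrac12\gamma_b(G)$ that satisfies the packing constraint $|N_k[v]\cap M|\le k$ for every $v$ and every $k\ge 1$; since $\mp(G)\ge|M|$, this yields the conjectured inequality. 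Crucially the bound should be read as a global one: I would allow individual balls to contribute fewer than $r_i/2$ vertices as long as the deficits are compensated elsewhere, so the argument is an amortized charging rather than a strictly local one.

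To make the construction concrete I would first invoke the structure theory of optimal broadcasts (in the spirit of the polynomial-time algorithm of \citet{heggernes2006optimal}) to reduce to a minimal, efficient broadcast $f$ whose balls overlap only in a controlled, path-like fashion. Inside each ball one would like a geodesic through $v_i$ of length close to $2r_i$ along which vertices can be placed at a spacing that satisfies $|N_k[v]\cap M|\le k$ for path centers $v$; spacing two apart would give the target density of roughly one half. The remaining work is then to glue the local sets together across overlapping balls and, most importantly, to certify the multipacking inequality against \emph{every} vertex of $G$, including those lying off the chosen geodesics.

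The hard part—and, I expect, the reason this remains a conjecture while \citet{beaudou2018broadcast} could only reach $\gamma_b(G)\le 2\,\mp(G)+3$—is exactly this global certification. Each ball need not actually contain a geodesic of length near $2r_i$, so the per-ball density one can guarantee may fall short of one half; and in a graph whose balls are ``spread out'' a single vertex $v$ may see $M$-contributions from many balls at once, so a naive union violates the packing constraint. The present paper's hypercube analysis is a warning sign here: even for one family, controlling $|N_k[v]\cap M|$ uniformly over all $v$ requires a classical discrepancy estimate, which suggests that the general case has no purely local combinatorial resolution and that removing the additive $+3$ demands a genuinely uniform accounting.

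An alternative attack proceeds through the linear-programming duality underlying both parameters. Writing $\gamma_b^*(G)$ and $\mp^*(G)$ for the common optimum of the dual LP relaxations, one has $\mp(G)\le\mp^*(G)=\gamma_b^*(G)\le\gamma_b(G)$, so the conjecture would follow from bounding the product of the domination-side and packing-side integrality gaps by $2$. I expect this route to be cleaner to state but no easier to carry out, since neither gap is known to be bounded by an absolute constant for arbitrary graphs. Moreover, the tightness exhibited here for $Q_n$, where $\gamma_b(Q_n)/\mp(Q_n)\to 2$, shows that any successful proof must permit the product to attain exactly $2$, leaving no additive slack to absorb boundary losses; this is precisely what makes the sharp constant delicate and the full conjecture still open.
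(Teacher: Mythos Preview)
The statement you were asked to prove is labelled as a \emph{conjecture} in the paper, not a theorem; the paper does not supply a proof of it in general and does not claim to. What the paper actually establishes is the special case $G=Q_n$: from the lower bound $\mp(Q_n)\geqslant\lfloor n/2\rfloor$ of Theorem~\ref{r5} together with the known value $\gamma_b(Q_n)=n-1$ for $n\geqslant 3$, one gets $\gamma_b(Q_n)\leqslant 2\mp(Q_n)$ on hypercubes. There is therefore no ``paper's own proof'' of Conjecture~\ref{r4} against which to compare your proposal.

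As for your proposal itself, it is not a proof but a survey of possible attacks (a charging/geodesic construction and an LP integrality-gap route), and you explicitly acknowledge in your own text that the conjecture ``remains'' open and that removing the additive $+3$ is ``still open''. That self-assessment is accurate: neither of your two sketches is carried to a conclusion, and the obstacles you name (balls need not contain long geodesics; a single vertex can see contributions from many balls; neither LP integrality gap is known to be bounded) are exactly why no one has proved the general statement. If the intent was to prove Conjecture~\ref{r4}, there is a genuine gap---no step of either outline is actually executed. If the intent was merely to verify it on hypercubes, then the relevant argument is much simpler than anything you wrote: it is just $\gamma_b(Q_n)=n-1\leqslant 2\lfloor n/2\rfloor\leqslant 2\mp(Q_n)$.
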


The reason for the multiplier of $2$ in the above conjecture is that there are a few small graphs (including $C_4$, $C_5$) where $\mp(G) = 1$ and $\gamma_b(G) = 2$ and a few others where $\mp(G) = 2$ and $\gamma_b(G) = 4$. By taking disjoint copies of these examples, one can construct, for any $k \geqslant 1$, a graph with $\mp(G) = k$ and $\gamma_b(G) = 2k$. Hence, the above conjecture, if true, is tight. While noting this, \citet{beaudou2018broadcast} lamented that we do not have an infinite family of connected graphs where the ratio $\frac{\gamma_b(G)}{\mp(G)}$ approaches $2$. The best construction known so far is by \citet{hartnell2014} who constructed an infinite family of connected graphs where $\frac{\gamma_b(G)}{\mp(G)} = \frac{4}{3}$. 

\subsection{Results}

The main contribution of this note is to show that hypercubes form an infinite family of connected graphs where $\frac{\gamma_b(G)}{\mp(G)}$ approaches $2$. An  $n$-dimensional hypercube $Q_n$ is the Cartesian product of $n$ copies of the complete graph $K_{2}$. Alternatively, it can be visualized as a graph whose vertex set is $\{0,1\}^{n}$ and two vertices are adjacent if exactly one coordinate is different. Our main result is

\begin{theorem}\label{r5}
    For any positive integer $n$, 
    $$
    \left\lfloor\frac{n}{2}\right\rfloor 
        \leqslant \mp(Q_{n}) 
        \leqslant \frac{n}{2} + 6\sqrt{2n}.
    $$
\end{theorem}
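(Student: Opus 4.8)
# Proof Proposal

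\textbf{Lower bound.} The plan is to build a multipacking of size $\lfloor n/2 \rfloor$ in $Q_n$ by a recursive construction on the dimension. The key structural fact is that $Q_n = Q_{n-2} \mathbin{\square} Q_2$, so a vertex of $Q_n$ is a pair $(x, s)$ with $x \in \{0,1\}^{n-2}$ and $s \in \{0,1\}^2$, and distances add: $d_{Q_n}((x,s),(y,t)) = d_{Q_{n-2}}(x,y) + d_{Q_2}(s,t)$. Given a multipacking $M'$ in $Q_{n-2}$ of size $\lfloor (n-2)/2\rfloor = \lfloor n/2\rfloor - 1$, I would form $M = \{(m, 00) : m \in M'\} \cup \{(z, 11)\}$ for a carefully chosen $z \in \{0,1\}^{n-2}$ (the natural guess is a vertex at large distance from all of $M'$, e.g.\ far from a ``center'' of $M'$, or simply a fixed vertex if the base cases are set up symmetrically). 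The single added vertex sits in a different $Q_2$-fibre at $Q_2$-distance $2$ from the rest. The verification is the crux: for every vertex $v = (w,s)$ and every radius $r$, I must check $|N_r[v] \cap M| \le r$. A vertex of $M'$-type $(m,00)$ lies in $N_r[v]$ iff $d(w,m) \le r - d(s,00)$, and the extra vertex $(z,11)$ lies in $N_r[v]$ iff $d(w,z) \le r - d(s,11)$; since $d(s,00) + d(s,11) = 2$ for every $s$, counting splits into cases on $s$ and reduces to the multipacking inequality for $M'$ at a smaller radius, with the ``$+1$'' from the new vertex absorbed by the ``$+1$'' gained when $r$ increases past the fibre offset. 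I expect this case analysis — ensuring the offsets $d(s,00), d(s,11) \in \{0,1,2\}$ always leave enough slack — to be the main obstacle, and I would need to choose the base cases ($n=1,2,3$: singletons or a suitable $2$-set in $Q_3$) so the induction starts cleanly.

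\textbf{Upper bound.} The plan is to bound the size of \emph{any} multipacking $M$ in $Q_n$ using the radius-$r$ balls for a single well-chosen $r$. The ball $N_r[v]$ in $Q_n$ is a Hamming ball of volume $V(n,r) = \sum_{i=0}^{r}\binom{n}{i}$, and the multipacking condition forces $|M \cap N_r[v]| \le r$ for all $v$. A double-counting / averaging argument over all $v \in V(Q_n)$ gives $|M| \cdot V(n,r) = \sum_{v} |M \cap N_r[v]| \le r \cdot 2^n$, hence $|M| \le r \cdot 2^n / V(n,r)$. Choosing $r$ slightly above $n/2$ (so that the central binomial mass makes $V(n,r)$ close to $2^{n-1}$ or more) should yield $|M| \le n/2 + O(\sqrt n)$; the hint in the abstract that a ``classic result from discrepancy theory'' is used suggests instead that the right tool is the Roos / Kleitman-type estimate on partial binomial sums, or more likely a discrepancy bound for the VC-dimension-type system of Hamming balls. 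Concretely, I would invoke the known bound on the discrepancy of balls in the hypercube: there is a $\pm 1$ colouring (equivalently, a comparison of $|M \cap N_r[v]|$ to its ``expected'' value $|M| V(n,r)/2^n$) with discrepancy $O(\sqrt{n} \cdot 2^{?})$, and combine it with the constraint at the antipode.

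\textbf{The discrepancy step in detail.} The cleanest route I foresee: for $r = \lfloor n/2 \rfloor$, a vertex $v$ and its antipode $\bar v$ satisfy $N_r[v] \cup N_r[\bar v] \supseteq V(Q_n)$ (since any vertex is within distance $\lfloor n/2\rfloor$ or $\lceil n/2 \rceil$ of $v$), with overlap $N_r[v] \cap N_r[\bar v]$ of size roughly $\binom{n}{r}$ (the ``equatorial band''). Then $|M| \le |M \cap N_r[v]| + |M \cap N_r[\bar v]| \le 2r \le n$, which is only half of what we want — so the subtlety, and the real obstruction, is shaving this to $n/2 + O(\sqrt n)$. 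This is where discrepancy enters: by a theorem bounding how unevenly $M$ can distribute between the two hemispheres across all choices of $v$ (an $L_\infty$ discrepancy bound of order $\sqrt{n}$ for the hemisphere set system on $Q_n$, which follows from classical results), one can find a vertex $v$ for which $|M \cap N_{r}[v]|$ and $|M \cap N_{r-1}[\bar v]|$ are both within $O(\sqrt n)$ of $|M|/2$, and since these two sets are disjoint and cover $V(Q_n)$, we get $|M| \le |M \cap N_r[v]| + |M \cap N_{r-1}[\bar v]|$ where the radius-$(r-1)$ term is at most $r-1$; balancing the discrepancy slack against the radius gives $|M| \le n/2 + O(\sqrt n)$, with the constant $6\sqrt 2$ coming from an explicit version of the discrepancy estimate. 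The main obstacle throughout is getting the discrepancy constant explicit and small enough, and correctly handling the parity of $n$ in the antipodal covering.
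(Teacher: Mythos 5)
Both halves of your proposal differ from the paper's proof, and both have genuine gaps.

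\textbf{Lower bound.} Your one-vertex-at-a-time construction $M=\{(m,00):m\in M'\}\cup\{(z,11)\}$ hides the entire difficulty in the ``carefully chosen $z$''. Work out the count for $v=(w,s)$ with $s=00$ and radius $r$: it is $|N_r[w]\cap M'|+[\,d(w,z)\le r-2\,]$, and the first term is allowed to equal $r$ (the multipacking inequality for $M'$ can be tight). So you need $z$ to lie outside $N_{r-2}[w]$ for \emph{every} $r\ge 2$ and \emph{every} $w$ at which $M'$ is tight at radius $r$, simultaneously for all $r$ up to $|M'|$; you give no argument that such a $z$ exists, and for large multipackings the set of tight vertices together with its $(r-2)$-neighbourhood can be very large. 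The paper avoids this entirely: it combines two multipackings $P_0,P_1$ of \emph{comparable} size by prefixing them with $0^{n-n_0}$ and $1^{n-n_1}$, where the separator length $n-n_0=\max(|P_0|,2)+\max(|P_1|,2)-1$ is at least $|P|-1$. Then for any $x$ with $q_0$ zeros and $q_1$ ones in the separator, the two contributions at radius $k$ total at most $(k-q_1)+(k-q_0)=2k-(q_0+q_1)\le k$, since $q_0+q_1\ge |P|-1\ge k$; a balanced doubling recursion (combining $Q_{2\lceil k/2\rceil}$ and $Q_{2\lfloor k/2\rfloor}$ into $Q_{2k}$) keeps the dimension at $2k$ for $k$ vertices. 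Your scheme achieves the same dimension-per-vertex rate but separates the new vertex from the old block by only $2$ coordinates, so this cancellation is unavailable and the induction does not close as stated.

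\textbf{Upper bound.} Your averaging bound $|M|\le r\cdot 2^n/V(n,r)$ optimises to $n/2+\Theta(\sqrt{n\log n})$, not $n/2+O(\sqrt n)$, since one needs $r=n/2+\Theta(\sqrt{n\log n})$ before $V(n,r)/2^n$ is within $O(1/\sqrt n)$ of $1$. Your antipodal-hemisphere argument has an arithmetic gap: for $N_r[v]$ and $N_{r-1}[\bar v]$ to partition $V(Q_n)$ you are forced to take $r=n/2$, and then combining $|M\cap N_{r-1}[\bar v]|\ge |M|/2-O(\sqrt n)$ with $|M\cap N_{r-1}[\bar v]|\le r-1$ yields only $|M|\le n+O(\sqrt n)$; splitting $M$ evenly between two balls can never beat $2r$. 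The paper's mechanism is different in kind: it puts \emph{all} of $M$ into \emph{one} ball. Spencer's theorem is applied to the $2|M|$ sets $A_i$ (the support of $x_i$) and $\overline{A_i}$ over the coordinate set; the resulting $\pm1$ colouring dictates which coordinates to flip, the flip is a distance-preserving automorphism of $Q_n$, and afterwards every $x_i$ has Hamming weight at most $\frac{n}{2}+\disc(\mathcal{A})\le\frac{n}{2}+6\sqrt{2|M|}$. Hence $M\subseteq N_{\frac{n}{2}+6\sqrt{2|M|}}[0^n]$, and the multipacking condition applied at the single vertex $0^n$ gives $|M|\le\frac{n}{2}+6\sqrt{2|M|}\le\frac{n}{2}+6\sqrt{2n}$. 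You correctly guessed that discrepancy theory and the constant $6\sqrt{2}$ enter, but the set system is the family of coordinate supports of the multipacking, not a family of Hamming balls or hemispheres.
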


\citet{brevsarproductgraph} showed in 2019 that $\gamma_b(Q_n) = n-1$ for $n \geqslant 3$ and $\gamma_b(Q_n) = n$ for $n \in \{1,2\}$. Since $n - 1 \leqslant 2\left\lfloor \frac{n}{2} \right\rfloor$, the lower bound in Theorem~\ref{r5} proves Conjecture~\ref{r4} on hypercubes. More interestingly, we see that $\lim\limits_{n \rightarrow \infty} \frac{\gamma_b(Q_n)}{\mp(Q_n)} = 2$. This, together with the upper bound $\gamma_b(G) \leqslant 2\mp(G) + 3$ \cite{beaudou2018broadcast} lets us conclude

\begin{corollary}\label{corLimSup}
    For all connected graphs $G$,
    $$
    \limsup_{\mp(G) \rightarrow \infty} \left\{
        \frac{\gamma_b(G)}{\mp(G)}\right\} = 2.
    $$
        
\end{corollary}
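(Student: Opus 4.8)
To prove Corollary~\ref{corLimSup}, the plan is to establish the two inequalities $\limsup \leqslant 2$ and $\limsup \geqslant 2$ separately: the first is immediate from the general upper bound on $\gamma_b$ recalled in the introduction, and the second follows by specialising Theorem~\ref{r5} to the hypercubes, which form a connected family along which the multipacking number tends to infinity.

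First I would handle the upper bound. By the result of \citet{beaudou2018broadcast}, every graph satisfies $\gamma_b(G) \leqslant 2\mp(G) + 3$. Dividing by $\mp(G)$ gives $\frac{\gamma_b(G)}{\mp(G)} \leqslant 2 + \frac{3}{\mp(G)}$, so for every $\varepsilon > 0$ any connected graph with $\mp(G) > 3/\varepsilon$ has ratio less than $2 + \varepsilon$. Hence $\limsup_{\mp(G) \rightarrow \infty}\left\{\frac{\gamma_b(G)}{\mp(G)}\right\} \leqslant 2$.

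For the matching lower bound I would exhibit a witnessing sequence, namely the hypercubes $Q_n$ for $n \geqslant 3$, each of which is connected. By the lower bound in Theorem~\ref{r5}, $\mp(Q_n) \geqslant \lfloor n/2 \rfloor$, so $\mp(Q_n) \to \infty$ and the sequence is admissible for the limit superior. Combining the upper bound in Theorem~\ref{r5} with the identity $\gamma_b(Q_n) = n - 1$ of \citet{brevsarproductgraph} yields
$$
\frac{\gamma_b(Q_n)}{\mp(Q_n)} \;\geqslant\; \frac{n-1}{\tfrac{n}{2} + 6\sqrt{2n}} \;=\; \frac{2(n-1)}{n + 12\sqrt{2n}},
$$
and since $12\sqrt{2n} = o(n)$, the right-hand side tends to $2$ as $n \to \infty$. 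Therefore $\limsup_{\mp(G) \rightarrow \infty}\left\{\frac{\gamma_b(G)}{\mp(G)}\right\} \geqslant 2$, and together with the previous paragraph the limit superior equals $2$.

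I do not expect a genuine obstacle in the corollary itself: all the substantive work is carried by Theorem~\ref{r5}. The only points that deserve a moment's care are that the hypercube sequence is a legitimate witness for the limit superior — that $\mp(Q_n)$ is unbounded and that the $Q_n$ are connected — both of which are immediate. If one prefers to avoid asymptotic notation, the estimate $\frac{2(n-1)}{n+12\sqrt{2n}} \geqslant 2 - \frac{C}{\sqrt{n}}$ for a suitable absolute constant $C$ and all sufficiently large $n$ can be written out directly.
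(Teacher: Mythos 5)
Your proof is correct and follows exactly the paper's route: the upper bound $\limsup \leqslant 2$ comes from $\gamma_b(G) \leqslant 2\mp(G)+3$, and the lower bound comes from the hypercube sequence via Theorem~\ref{r5} together with $\gamma_b(Q_n)=n-1$. The paper leaves these two steps implicit (it simply notes $\lim_{n\to\infty}\gamma_b(Q_n)/\mp(Q_n)=2$ and cites the $2\mp(G)+3$ bound), and your write-up just makes them explicit.
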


\subsection{Proof Techniques}

The lower bound in Theorem \ref{r5} is proved by introducing a recursive technique that systematically generates multipacking of higher-dimensional hypercubes by combining multipackings from lower dimensions. For the proof of the upper bound, we bank on a classic result by   \citet{spencer1985six} from combinatorial discrepancy theory.

\subsection{Related Results}

The first inequality in $\mp(G) \leqslant \gamma_b(G) \leqslant 2\mp(G) + 3$ was shown to be tight in some graph families like trees \cite{teshima2012broadcasts}, grid graphs $P_{m} \square P_{n}$ (except $(m,n) \neq (4,6)$) \cite{brewster2019multipacking} and strongly chordal graphs \cite{brewster2019broadcast}. A graph is \textit{strongly chordal} if it is chordal and every even cycle of length at least $6$ has a chord that connects two vertices which are at an odd distance apart on the cycle. 

\citet{hartnell2014} shown that the difference between $\mp(G)$ and $\gamma_b(G)$ can be arbitrarily large by constructing an infinite family of connected graphs $G$ such that $\frac{\gamma_b(G)}{\mp(G)}=\frac{4}{3}$. This construction and the upper bound $\gamma_b(G) \leqslant 2mp(G) + 3$ meant that for connected graphs $G$
$$
    \frac{4}{3} \leqslant 
        \limsup_{\mp(G) \rightarrow \infty} \left\{\frac{\gamma_b(G)}{\mp(G)}\right\} 
        \leqslant 2.
$$ 
While our Corollary~\ref{corLimSup} improves the above, optimal bounds for this ratio were studied for special graph classes.  For connected chordal graphs $G$, \citet{das2023relation} showed that 
$$
    \frac{10}{9} 
    \leqslant \limsup_{\mp(G) \rightarrow \infty} 
        \left\{\frac{\gamma_b(G)}{\mp(G)}\right\} 
    \leqslant \frac{3}{2}.
$$ 
A \textit{cactus} is a connected graph in which any two cycles share at most one vertex. A graph $G$ is a \textit{$\delta$-hyperbolic graph}, if for any four vertices $u, v, w, x$ of $G$, among the three sumations $d(u,v)+d(w,x), d(u,w)+d(v,x)$ and $d(u,x)+ d(v,w)$, the difference between the two of the largest sums is at most $2\delta$. A graph class is said to be \textit{hyperbolic} if there exists a constant $\delta$ such that every graph in that class is $\delta$-hyperbolic. \citet{das2025multipacking} proved that for cactus graphs and $\frac{1}{2}$-hyperbolic graphs $G$,  
$$
    \frac{4}{3} 
        \leqslant \limsup_{\mp(G) \rightarrow \infty}
            \left\{\frac{\gamma_b(G)}{\mp(G)}\right\} 
        \leqslant \frac{3}{2}.
$$


\subsection{Terminology}

Every graph discussed in this note is finite, simple, and undirected. $\mathbb{N}$ denotes the set of natural numbers (including $0$). For any positive integer $n$, we denote the set $\{1,2, \dots, n\}$ as $[n]$. Any undefined terms and notations are in accordance with \citet{chartrand2010graphs}.

\section{Proof of Theorem \ref{r5}} \label{s2}
We use exponential notation to indicate repeated sequences in the vertex so that a vertex $110001$ in $Q_6$ will be written as $1^2 0^3 1$. The \textit{hamming weight} $\operatorname{wt}(u)$ of a vertex $u$ is the distance of $u$ from $0^{n}$ in $Q_n$. As a warm-up, first we determine $\mp(Q_{n})$ for $n \leqslant 6$. It is easy to observe that  $\mp(Q_{1})=1, \mp(Q_{2})=1, \mp(Q_{3})=2, \mp(Q_{4})=2$ and $\mp(Q_{5})=2$. 
\begin{proposition}\label{propQ6}
$\mp(Q_{6})=4$.
\end{proposition}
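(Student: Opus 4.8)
The plan is to prove the two inequalities $\mp(Q_6) \geqslant 4$ and $\mp(Q_6) \leqslant 4$ separately. For the lower bound, I would exhibit an explicit set $M$ of four vertices in $Q_6$ and verify the multipacking condition directly. A natural candidate is to spread the points out in Hamming weight: for instance take $M = \{0^6,\ 1^2 0^4,\ 1^4 0^2,\ 1^6\}$, so that the four chosen vertices have weights $0, 2, 4, 6$ and lie along a ``geodesic chain''. The constraint $|N_k[v]\cap M|\leqslant k$ only needs to be checked for $k \in \{1,2,3\}$ (for $k\geqslant 4$ it is automatic since $|M|=4$), and by the vertex-transitivity of $Q_6$ it suffices to understand, for each pair of the four points, their mutual distance, and then bound how many of them can fall in a common ball $N_k[v]$. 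Here the pairwise distances are all even and at least $2$; the $k=1$ condition is immediate, the $k=2$ condition says no ball of radius $2$ contains $3$ of the points, and $k=3$ says no ball of radius $3$ contains all $4$. These are finite checks; a clean way to organise them is to note that any vertex $v$ within distance $k$ of two points $u_1,u_2\in M$ forces $d(u_1,u_2)\leqslant 2k$, and to use the weight structure to rule out the bad configurations.

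For the upper bound $\mp(Q_6)\leqslant 4$, suppose for contradiction that $M$ is a multipacking with $|M|\geqslant 5$. The $k=1$ condition already forces $M$ to be an independent set (in fact a code of minimum distance $\geqslant 2$); more usefully, the $k=2$ condition says every ball of radius $2$ contains at most $2$ points of $M$, and the $k=3$ condition says every ball of radius $3$ contains at most $3$ points of $M$. Since a ball of radius $3$ in $Q_6$ is ``half the cube'' in a precise sense — $N_3[v]$ has $\sum_{i=0}^{3}\binom{6}{i}=42$ vertices out of $64$, and its complement is $N_2[\bar v]$ where $\bar v$ is the antipode — the radius-$3$ condition applied at both $v$ and $\bar v$ says: at most $3$ points of $M$ in $N_3[v]$ and at most $2$ in the complementary ball $N_2[\bar v]$, giving $|M|\leqslant 5$. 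To push this to $|M|\leqslant 4$ I would argue that equality $|M|=5$ forces, for every vertex $v$, exactly $3$ points in one half and $2$ in the other, and then derive a contradiction by a counting/parity argument: summing $|N_2[v]\cap M|$ over all $v$ gives $\sum_{u\in M}|N_2[u]| = 5\cdot(1+6+15) = 110$, while the constraint $|N_2[v]\cap M|\leqslant 2$ for all $64$ vertices gives $\sum_v |N_2[v]\cap M|\leqslant 128$ — not yet a contradiction, so the argument needs to be sharpened by also using the $k=3$ constraint and the antipodal decomposition simultaneously, or by a more careful local analysis around a point of $M$.

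I expect the upper bound to be the main obstacle. The averaging bounds above are loose by a constant factor, so the real work is a structural case analysis: fix $u\in M$, look at the distribution of the other four points of $M$ among the weight-shells around $u$ (they sit at distances in $\{2,4,6\}$ from $u$ by the $k=1$ and $k=2$ conditions restricting how many can be close), and show that any placement violates some $N_k[v]$ constraint for a cleverly chosen $v$ — typically a vertex ``between'' two or three of the points. Exploiting the antipodal symmetry $N_3[v]^c = N_2[\bar v]$ is the key trick that makes the radius-$3$ constraint bite hard enough. Once the configuration is pinned down enough, the remaining contradiction should be a short direct computation.
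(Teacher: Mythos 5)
There are two genuine problems here. First, your lower-bound witness $M = \{0^6,\ 1^2 0^4,\ 1^4 0^2,\ 1^6\}$ is \emph{not} a multipacking: the claim that the $k=1$ condition is ``immediate'' because the pairwise distances are even and at least $2$ is false. Two points of $M$ at distance exactly $2$, e.g.\ $0^6$ and $1^2 0^4$, have a common neighbour $v$ (take $v = 10^5$), and then $|N_1[v]\cap M| \geqslant 2 > 1$. In fact the $k=1$ condition forces any multipacking to have pairwise distances at least $3$ (not $\geqslant 2$, as you assert in your upper-bound discussion), so a chain of points of weights $0,2,4,6$ can never work. The paper's witness is $\{0^6,\ 0^3 1^3,\ 1^3 0^3,\ 1^6\}$, whose pairwise distances are all $3$ or $6$; with minimum distance $3$ the $k=1$ check passes and the $k=2,3$ checks are short.

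Second, your upper-bound argument only reaches $|M|\leqslant 5$ (via the antipodal decomposition $V(Q_6) = N_3[v] \cup N_2[\bar v]$), and you yourself note that the averaging step intended to exclude $|M|=5$ does not close ($110 \leqslant 128$). The missing idea is a short local argument, not a sharper global count: assume $|M|=5$ and, by vertex-transitivity, $0^6 \in M$. The $k=1$ condition at $0^6$ excludes weight-$1$ points of $M$, and the $k=1$ condition at a common neighbour excludes weight-$2$ points; hence the other four points of $M$ all have weight at least $3$, i.e.\ they all lie in $N_3[1^6]$, giving $|N_3[1^6]\cap M|\geqslant 4 > 3$, contradicting the $k=3$ condition. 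This is precisely the ``cleverly chosen $v$'' (namely $v=1^6$) that your sketch defers to future case analysis; without it, the upper bound is not proved.
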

\begin{proof}
One can observe that, since the set $\{0^6, 0^31^3, 1^30^3, 1^6\}$ is a multipacking in $Q_{6}$, $\mp(Q_{6}) \geqslant 4$. Now, we show that no set of order $5$ can be a multipacking in $Q_{6}$. On the contrary, let $P$ be a multipacking in $Q_{n}$ of $5$ vertices. As $Q_{n}$ is vertex-transitive, without loss of generality, suppose $0^{6} \in P$. Then $P$ cannot contain any vertex of hamming weight $1$ or $2$. Then $P \setminus \{0^{6}\} \subseteq N_{3}[1^{6}]$. Hence, $|N_{3}[1^{6}] \cap P|=4 > 3$, a contradiction to the fact that $P$ is a multipacking. Therefore, $\mp(Q_{6})=4$. 
\end{proof}

Since $Q_{n+1}$ contains a copy of $Q_n$ as a distance preserving subgraph, it is easy to observe that the multipacking number of $Q_n$ is monotonic in $n$. 
\begin{observation}\label{r0}
For any positive integer $n$, $\mp(Q_{n}) \leqslant \mp(Q_{n+1})$.
\end{observation}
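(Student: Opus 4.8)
The plan is to exhibit, inside $Q_{n+1}$, an isometric (distance-preserving) copy of $Q_n$, and then argue that the intersection of a multipacking of $Q_{n+1}$ with such a copy behaves well enough to transfer multipackings the other way—or, more directly, to embed a multipacking of $Q_n$ into $Q_{n+1}$ without destroying the packing condition. Concretely, fix the map $\iota\colon V(Q_n)\to V(Q_{n+1})$ sending $u\mapsto u0$ (append a $0$ in the last coordinate). For any $u,v\in V(Q_n)$ the Hamming distance is unchanged by $\iota$, so $\iota$ is an isometric embedding and $\iota(Q_n)$ is a distance-preserving subgraph of $Q_{n+1}$.

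The key step is then the following: if $M\subseteq V(Q_n)$ is a multipacking of $Q_n$, I claim $\iota(M)$ is a multipacking of $Q_{n+1}$. Take any vertex $w\in V(Q_{n+1})$ and any $k\geqslant 1$; write $w=w'b$ with $w'\in V(Q_n)$ and $b\in\{0,1\}$. For $x\in M$ we have $d_{Q_{n+1}}(w,\iota(x)) = d_{Q_n}(w',x) + b$, so $\iota(x)\in N_k[w]$ iff $x\in N_{k-b}[w']$. Hence $|N_k[w]\cap \iota(M)| = |N_{k-b}[w']\cap M| \leqslant \max(k-b,0)\cdot[\,k-b\geqslant 1\,]\leqslant k$, using that $M$ is a multipacking in $Q_n$ (and that the count is $0$ when $k-b<1$). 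Thus $\iota(M)$ satisfies the multipacking condition at every vertex of $Q_{n+1}$, giving $|\iota(M)| = |M|$ copies, and taking $M$ to be a maximum multipacking of $Q_n$ yields $\mp(Q_{n+1})\geqslant \mp(Q_n)$.

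I do not expect any serious obstacle here—the statement is essentially the observation that appending a constant coordinate is isometric and only weakly relaxes the ball-intersection bounds. The one point requiring a little care is the boundary case $b=1$, $k=1$: there $N_{k-b}[w']=N_0[w']=\{w'\}$, and $\iota(M)$ could contain $\iota(w')$ only if $w'\in M$, in which case $|N_1[w]\cap\iota(M)|\leqslant 1 = k$, so the bound still holds. Everything else is a direct substitution. (Alternatively, one could phrase the proof in the contrapositive: restrict a maximum multipacking of $Q_{n+1}$... but the forward embedding above is cleaner and suffices for the stated monotonicity.)
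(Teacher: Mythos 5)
Your proof is correct and follows essentially the same route as the paper, which disposes of this observation in one sentence by noting that $Q_{n+1}$ contains $Q_n$ as a distance-preserving subgraph (your embedding $u \mapsto u0$) and leaves the verification implicit. Your explicit check that the multipacking condition holds at \emph{every} vertex of $Q_{n+1}$ --- including vertices outside the embedded copy, and the boundary case $b=1$, $k=1$ where the displayed inequality chain alone would not suffice --- supplies exactly the detail the paper omits.
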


\subsection{Lower bound}
We begin with a couple of lemmas needed for the recursive construction. Given a vertex \( x \in Q_n \), and an integer \( m \geqslant 1 \), we define \( i^m \cdot x \) as the binary string obtained by concatenating $i^m$ to \( x \), where $i \in \{0,1\}$. For any set of vertices $S$, $$i^m \cdot S=\{i^m \cdot x: x \in S\}.$$


\begin{lemma}\label{r10}
Let $n_{0} \geqslant n_{1}$ be two positive integers, and $Q_{n_0}$ and $Q_{n_1}$ be two hypercubes equipped with multipackings $P_{0}$ and $P_{1}$, respectively. Further, let 
$$
    n = n_{0} + \max(|P_0|,2) + \max(|P_1|,2) -1,
$$ 
and $P$ be the set $$
(0^{n - n_{0}} \cdot P_{0}) \cup (1^{n - n_{1}} \cdot P_{1}).
$$ Then $P$ is a multipacking in $Q_n$.
\end{lemma}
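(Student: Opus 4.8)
The plan is to verify directly that $P$ satisfies the multipacking inequality at every vertex and every radius, exploiting that the two ``halves'' of $P$ live at opposite ends of $Q_n$. Set $a := \max(|P_0|,2)$ and $b := \max(|P_1|,2)$, so that $n-n_0 = a+b-1 \geqslant 3$ and $n-n_1 = (n_0-n_1)+(a+b-1) \geqslant 3$. Write $P^{(0)} := 0^{n-n_0}\cdot P_0$ and $P^{(1)} := 1^{n-n_1}\cdot P_1$. Every vertex of $P^{(0)}$ begins with a $0$ and every vertex of $P^{(1)}$ begins with a $1$, so $P = P^{(0)}\cup P^{(1)}$ is a disjoint union, and it suffices to show $|N_k[w]\cap P^{(0)}| + |N_k[w]\cap P^{(1)}| \leqslant k$ for every $w\in V(Q_n)$ and every $k\geqslant 1$.

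Fix such a $w$ and $k$. Decompose $w = w'w''$ with $|w'| = n-n_0$, and also $w = \bar w\hat w$ with $|\bar w| = n-n_1$; put $s := \operatorname{wt}(w')$ and let $t$ be the number of zeros in $\bar w$. Computing Hamming distances coordinatewise gives $d(w,\,0^{n-n_0}\cdot x) = s + d_{Q_{n_0}}(w'',x)$ for $x\in P_0$ and $d(w,\,1^{n-n_1}\cdot y) = t + d_{Q_{n_1}}(\hat w,y)$ for $y\in P_1$. Hence $|N_k[w]\cap P^{(0)}| = |N_{k-s}[w'']\cap P_0|$ and $|N_k[w]\cap P^{(1)}| = |N_{k-t}[\hat w]\cap P_1|$, with the convention that a ball of negative radius is empty. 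By the multipacking property of $P_0$, the first of these is $0$ if $k<s$, at most $1$ if $k=s$, at most $k-s$ if $k>s$, and at most $|P_0|\leqslant a$ in any case; the analogous bounds hold for the second with $t$ and $b$ in place of $s$ and $a$.

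The crux is the inequality $s+t \geqslant n-n_0 = a+b-1$. Since $n_0\geqslant n_1$, the coordinates indexing $w'$ form a prefix of those indexing $\bar w$, and a direct count shows that $s+t$ equals $n-n_1$ minus the number of ones among coordinates $n-n_0+1,\dots,n-n_1$ of $w$, which is at least $(n-n_1)-(n_0-n_1) = n-n_0$. Granting this, $|N_k[w]\cap P|\leqslant k$ follows by a short case analysis on the signs of $k-s$ and $k-t$. If one of them is negative, the corresponding half contributes nothing and the other contributes at most $k$. If both are positive, then $s+t\geqslant a+b-1$ forces $s\geqslant b$ or $t\geqslant a$ (otherwise $s+t\leqslant(b-1)+(a-1) < a+b-1$); in the first case $|N_k[w]\cap P|\leqslant (k-s)+|P_1|\leqslant(k-s)+b\leqslant k$, and the second is symmetric. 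The boundary cases $k\in\{s,t\}$ go through the same way, now using $a,b\geqslant 2$: for instance if $k=s$ then $|N_k[w]\cap P^{(0)}|\leqslant 1$, and either $t\geqslant 1$, making the total at most $1+(k-t)\leqslant k$, or $t=0$, so that $k=s\geqslant a+b-1\geqslant b+1$ and the total is at most $1+|P_1|\leqslant 1+b\leqslant k$; and if $s=t=k$ then $2k\geqslant a+b-1\geqslant 3$ forces $k\geqslant 2$ while each half contributes at most $1$.

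The only real friction I anticipate is the bookkeeping in the boundary cases $k\in\{s,t\}$, where the sphere of radius $k$ about $w$ passes through the seam between the two halves; this is precisely where replacing $|P_i|$ by $\max(|P_i|,2)$ earns its keep, since it guarantees $s+t\geqslant 3$ and thereby rules out the small-radius configurations (in particular any potential violation at $k=1$) that would otherwise break the argument. Everything else is routine arithmetic; as a sanity check, applied with $n_0=n_1=3$ and $P_0=P_1=\{0^3,1^3\}$ this recipe reproduces exactly the multipacking of $Q_6$ from Proposition~\ref{propQ6}.
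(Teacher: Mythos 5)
Your proof is correct and follows essentially the same route as the paper's: split $P$ into its two halves, express the distance from $w$ to each half as a prefix contribution plus a distance inside $Q_{n_0}$ or $Q_{n_1}$, establish $s+t\geqslant n-n_0=\max(|P_0|,2)+\max(|P_1|,2)-1$, and finish with a case analysis on the signs of $k-s$ and $k-t$, invoking the $\max(\cdot,2)$ terms exactly where the paper does, in the boundary cases. The only cosmetic difference is the case where both $k-s$ and $k-t$ are positive: the paper bounds the total by $(k-s)+(k-t)\leqslant k$ after restricting to $k\leqslant |P|-1$, whereas you deduce $s\geqslant b$ or $t\geqslant a$ and apply the trivial bound $|P_i|$ to one half, which lets you treat all $k\geqslant 1$ uniformly.
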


\begin{proof}
Let $p = |P| = |P_{0}|+|P_{1}|$. Pick any $x \in V(Q_n)$ and any $k \in [p-1]$. We will show that the number of vertices of $P$ in $N_{k}[x]$ is at most $k$ by counting separately the number of vertices of $0^{n - n_{0}} \cdot P_{0}$ and $1^{n - n_{1}} \cdot P_{1}$ in $N_{k}[x]$.

Let $x_0$ denote the $n_0$-length suffix of $x$ (the last $n_0$ bits) and $x_1$ denote the $n_1$-length suffix of $x$. Let $q_0$ and $q_1$ respectively denote the number of zeros and ones in the first $(n - n_0)$ bits of $x$. For any vertex $y \in P_{0}$,  $d_{Q_n}(x,0^{n - n_{0}} \cdot y) = q_1 + d_{Q_{n_0}}(x_0, y)$. For any vertex $y \in P_{1}$, $d_{Q_n}(x,1^{n - n_{1}} \cdot y) \geqslant q_0 + d_{Q_{n_1}}(x_1, y)$. Hence we have
\begin{align} 
    |N_k[x] \cap P|
        & = |N_k[x] \cap 0^{n-n_{0}} \cdot P_{0}| 
            + |N_k[x] \cap 1^{n-n_{1}} \cdot P_{1}| \notag \\
        &\leqslant |N_{k-q_1}[x_0] \cap P_{0}|
            + |N_{k-q_0}[x_1] \cap P_{1}|. \label{eq2}
\end{align}

If both $(k - q_1)$ and $(k-q_0)$ are positive, then the right hand side of (\ref{eq2}) is bounded above by $(k - q_1) + (k - q_0)$, since $P_i$ is a multipacking in $Q_{n_i}$ for each $i \in \{0,1\}$. Since $(q_0 + q_1) = (n - n_0) \geqslant (p - 1) \geqslant k$, this bound is at most $k$ and we are done. If $(k - q_1) < 0$, then the right hand side of (\ref{eq2}) is bounded above by $0 + (k - q_0) \leqslant k$. The case when $(k - q_0) < 0$ is similar.
If $(k - q_1) = 0$ but $q_0 > 0$, then the right hand side of (\ref{eq2}) is bounded above by $1 + (k - q_0) \leqslant k$. The case when $(k - q_0) = 0$ but $q_1 > 0$ is also similar.

We are only left with two boundary cases, viz., $k - q_1 = 0 = q_0$ and $k - q_0 = 0 = q_1$. Here we use the fact that $n - n_0 = \max(|P_0|,2) + \max(|P_1|,2) -1 \geqslant \max\{|P_0|, |P_1|\} + 1$. In the first boundary case, since $q_0 = 0$, we have $q_1 = (n - n_0) \geqslant |P_1| + 1$. Further since $k - q_1 = 0$ in this case, we have $k = q_1 \geqslant |P_1| + 1$. Hence, the right-hand side of (\ref{eq2}), which is at most $1 + |P_1|$, is bounded above by $k$. The second boundary case ($k - q_0 = 0 = q_1$) is similar.
\end{proof}


\begin{corollary}\label{r13}
Let $n_{0} \geqslant n_{1}$ be two positive integers. Let $\mp(Q_{n_0}) \geqslant p_0$ and $\mp(Q_{n_1}) \geqslant p_1$. Then $\mp(Q_n) \geqslant p_0+p_1$, where 
$$n = n_{0} + \max(p_0,2) + \max(p_1,2) - 1.$$
\end{corollary}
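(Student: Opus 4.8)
\textbf{Proof proposal for Corollary \ref{r13}.}
The plan is to derive this as an immediate consequence of Lemma \ref{r10}. Since $\mp(Q_{n_0}) \geqslant p_0$, fix a multipacking $P_0$ in $Q_{n_0}$ with $|P_0| = p_0$ (if $\mp(Q_{n_0})$ is strictly larger, just take any $p_0$-element subset of a maximum multipacking; a subset of a multipacking is clearly a multipacking). Similarly fix a multipacking $P_1$ in $Q_{n_1}$ with $|P_1| = p_1$. Then $\max(|P_0|,2) = \max(p_0,2)$ and $\max(|P_1|,2) = \max(p_1,2)$, so the quantity $n = n_0 + \max(|P_0|,2) + \max(|P_1|,2) - 1$ in Lemma \ref{r10} coincides with the $n$ in the statement.

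Next I would invoke Lemma \ref{r10} directly: it gives that $P = (0^{n-n_0}\cdot P_0)\cup(1^{n-n_1}\cdot P_1)$ is a multipacking in $Q_n$. The two pieces $0^{n-n_0}\cdot P_0$ and $1^{n-n_1}\cdot P_1$ are disjoint because their first bit differs (here we use $n > n_0 \geqslant n_1$, which holds since $\max(p_0,2)+\max(p_1,2)-1 \geqslant 2+2-1 = 3 > 0$, so the prefixes are non-empty), hence $|P| = |P_0| + |P_1| = p_0 + p_1$. Therefore $\mp(Q_n) \geqslant |P| = p_0 + p_1$, as claimed.

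There is no real obstacle here; the only thing to be careful about is the edge case handling of $\max(\cdot,2)$, which is already baked into Lemma \ref{r10}, and the trivial observation that any subset of a multipacking is a multipacking, so that we may assume $|P_i| = p_i$ exactly rather than $|P_i| = \mp(Q_{n_i})$. This corollary is the engine for the recursive lower-bound construction: starting from small base cases such as $\mp(Q_3)\geqslant 2$ and $\mp(Q_6)\geqslant 4$ and iterating, one accumulates a multipacking of size roughly $n/2$ in $Q_n$, which I expect the subsequent proof of the lower bound in Theorem \ref{r5} to carry out.
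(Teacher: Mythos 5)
Your proposal is correct and matches the paper's (implicit) argument: the corollary is stated as an immediate consequence of Lemma \ref{r10}, obtained exactly as you describe by taking multipackings of sizes exactly $p_0$ and $p_1$ and noting that the two prefixed copies are disjoint. The care you take with trimming to $|P_i| = p_i$ and with the non-emptiness of the prefixes is appropriate but routine.
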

This recursive approach leads to a general lower bound of the multipacking number on hypercubes, which we formalize in the subsequent results. 
\begin{lemma}\label{r11}
For any positive integer $k$, $\mp(Q_{2k}) \geqslant k$.
\end{lemma}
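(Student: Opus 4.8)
The plan is to prove this by strong induction on $k$, feeding the recursion of Corollary~\ref{r13} into itself and using the monotonicity of $\mp(Q_n)$ recorded in Observation~\ref{r0}.

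For the base cases $k \in \{1,2,3\}$ I would simply invoke the small values already determined in Section~\ref{s2}, namely $\mp(Q_2)=1$, $\mp(Q_4)=2$, and $\mp(Q_6)=4$ (Proposition~\ref{propQ6}); each of these is at least the corresponding value of $k$.

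For the inductive step, fix $k \geqslant 4$ and assume the claim for all smaller positive integers. Set $a = \lceil k/2\rceil$ and $b = \lfloor k/2\rfloor$, so that $a+b=k$ and $2 \leqslant b \leqslant a \leqslant k-1$. By the induction hypothesis, $\mp(Q_{2a}) \geqslant a$ and $\mp(Q_{2b}) \geqslant b$. Apply Corollary~\ref{r13} with $n_0 = 2a \geqslant n_1 = 2b$, $p_0 = a$, $p_1 = b$; since $a,b \geqslant 2$ the maxima in the statement are just $a$ and $b$, and we obtain $\mp(Q_n) \geqslant a+b = k$, where $n = 2a + a + b - 1 = 3a + b - 1$.

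It then remains only to verify that $n \leqslant 2k$, after which Observation~\ref{r0} yields $\mp(Q_{2k}) \geqslant \mp(Q_n) \geqslant k$ and closes the induction. A short split on the parity of $k$ settles this: if $k = 2m$ then $a=b=m$ and $n = 4m-1 < 2k$, while if $k = 2m+1$ then $a = m+1$, $b = m$ and $n = 4m+2 = 2k$. I expect this parity bookkeeping — together with the routine check that the induction hypothesis is genuinely available, i.e. that $a$ and $b$ are strictly smaller than $k$ and at least $2$ so that the $\max$ terms of Corollary~\ref{r13} collapse — to be the only delicate point; the rest is a direct application of the recursive construction.
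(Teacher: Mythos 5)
Your proposal is correct and follows essentially the same route as the paper: strong induction with base cases $k\leqslant 3$, splitting $k$ as $\lceil k/2\rceil+\lfloor k/2\rfloor$, applying Corollary~\ref{r13}, checking $n\leqslant 2k$, and finishing with Observation~\ref{r0}. Your explicit parity check of $n\leqslant 2k$ is just a more detailed rendering of the inequality the paper states without elaboration.
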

\begin{proof}
We use induction on $k$. The statement is easy to verify for $k \leqslant 2$ and holds for $k=3$ by Proposition~\ref{propQ6}. Suppose $k \geqslant 4$ and that the statement holds for all natural numbers less than $k$. Due to the induction hypothesis, we have
$$\mp\left(Q_{2\left\lceil\frac{k}{2}\right\rceil}\right) \geqslant \left\lceil\frac{k}{2}\right\rceil,~ \text{and} \mp\left(Q_{2\left\lfloor\frac{k}{2}\right\rfloor}\right) \geqslant \left\lfloor\frac{k}{2}\right\rfloor.$$

Since $\left\lfloor\frac{k}{2}\right\rfloor \geqslant 2$, by Corollary \ref{r13}, we have 
$$
    \mp(Q_n) 
        \geqslant  \left\lceil\frac{k}{2}\right\rceil
        +           \left\lfloor\frac{k}{2}\right\rfloor\\
        = k,
$$
where
$$
    n   =  2 \left\lceil\frac{k}{2}\right\rceil
        +   \left\lceil\frac{k}{2}\right\rceil
        +   \left\lfloor\frac{k}{2}\right\rfloor
        -   1 
        \leqslant 2k.
$$
Hence by Observation \ref{r0}, we have $\mp(Q_{2k}) \geqslant k$.
\end{proof}

\noindent\prf ~If $n=1$, then $\mp(Q_1) > 0$. When $n$ is an even positive integer, the proof follows from Lemma \ref{r11}. When $n=2k+1$ is odd for some positive integer $k$, by Lemma \ref{r11} and Observation \ref{r0} we have, 
$$\mp(Q_n) = \mp(Q_{2k+1}) \geqslant \mp(Q_{2k}) \geqslant k = \left\lfloor \frac{n}{2} \right\rfloor.$$ \qed

Though we cannot improve the lower bound of $\left\lfloor \frac{n}{2} \right\rfloor$ in general, we show that $\mp(Q_n) - \left\lfloor \frac{n}{2} \right\rfloor$ can be arbitrarily large.
\begin{proposition}
For every positive integer $i$, $\mp(Q_{n_i}) \geqslant \frac{n_i}{2} + \frac{\log_2 n_i - 1}{2}$, where $n_i = 2^{i+1} - i$.
\end{proposition}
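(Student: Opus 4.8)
The plan is to iterate Corollary~\ref{r13} with two identical copies of the same multipacking. The key observation is that whenever $\mp(Q_m) \geqslant p$ with $p \geqslant 2$, setting $n_0 = n_1 = m$ and $p_0 = p_1 = p$ in Corollary~\ref{r13} gives $\max(p,2) = p$, hence $\mp(Q_{m + 2p - 1}) \geqslant 2p$. So I would define $p_i = 2^i$ and prove by induction on $i$ that $\mp(Q_{n_i}) \geqslant p_i$, where $n_i = 2^{i+1} - i$. The base case $i = 1$ is $\mp(Q_3) \geqslant 2$, recorded in the warm-up. For the inductive step, since $p_i = 2^i \geqslant 2$, the doubling above yields $\mp(Q_{n'}) \geqslant 2p_i = p_{i+1}$ with $n' = n_i + 2p_i - 1 = (2^{i+1} - i) + 2^{i+1} - 1 = 2^{i+2} - (i+1) = n_{i+1}$, which closes the induction.

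It then remains to rewrite the bound $\mp(Q_{n_i}) \geqslant 2^i$ in the stated form. I would use the identity $2^i = \tfrac{2^{i+1} - i}{2} + \tfrac{i}{2} = \tfrac{n_i}{2} + \tfrac{i}{2}$ together with the elementary inequality $n_i = 2^{i+1} - i \leqslant 2^{i+1}$, which rearranges to $i \geqslant \log_2 n_i - 1$. Combining these, $\mp(Q_{n_i}) \geqslant 2^i = \tfrac{n_i}{2} + \tfrac{i}{2} \geqslant \tfrac{n_i}{2} + \tfrac{\log_2 n_i - 1}{2}$, as claimed. Moreover $2^i - \tfrac{n_i}{2} = \tfrac{i}{2} \to \infty$, so this also establishes that $\mp(Q_n) - \lfloor n/2 \rfloor$ is unbounded, the point the proposition is meant to illustrate.

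There is no real obstacle here; the only step requiring a little care is purely arithmetic, namely verifying that the dimension recurrence $n_{i+1} = n_i + 2\cdot 2^i - 1$ with $n_1 = 3$ has closed form $2^{i+1} - i$, and that the hypothesis $p_i \geqslant 2$ needed to apply Corollary~\ref{r13} holds at every stage (it does, trivially, since $p_i = 2^i$). One could equally run the argument keeping the recurrence $n_{i+1} = n_i + 2^{i+1} - 1$ unsolved, but carrying the closed form makes the final comparison with $\log_2 n_i$ transparent.
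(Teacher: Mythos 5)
Your proposal is correct and follows essentially the same route as the paper: iterate Corollary~\ref{r13} on two identical copies starting from $\mp(Q_3)\geqslant 2$, solve the recurrences $n_{i+1}=n_i+2p_i-1$, $p_{i+1}=2p_i$ to get $n_i=2^{i+1}-i$, $p_i=2^i$, and convert via $p_i=\frac{n_i}{2}+\frac{i}{2}$ and $i\geqslant \log_2 n_i -1$. Your write-up is in fact slightly more careful than the paper's in making the induction and the verification of the closed form explicit.
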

\begin{proof}
We construct a specific sequence of hypercubes \( \{Q_{n_i}\} \) by repeatedly applying Corollary \ref{r13} starting from \( Q_3 \), for which the multipacking number is $2$. Hence, we consider \( n_1 = 3 \) and \( p_1 = 2 \). At each step, we consider two identical copies multipacking of the hypercube $Q_{n_{i}}$, and using Lemma $\ref{r10}$, we obtain a multipacking of $Q_{n_{i+1}}$. We continue this process indefinitely and obtain the recurrence relations
\[
n_i = n_{i-1} + 2p_{i-1} - 1, \quad p_i = 2p_{i-1}.  
\] 
On solving these recurrence relations with initial conditions \( n_1 = 3 \) and \( p_1 = 2 \), we obtain
\[
n_i = 2^{i+1} - i , \quad p_i = 2^i \quad\text{for all } i \geqslant 1.
\] From $n_i = 2^{i+1} - i$, we get $p_i = \frac{n_i}{2} + \frac{i}{2}$, and using $n_i \leqslant 2^{i+1}$, it follows that $i \geqslant \log_2 n_i - 1$. As \( \mp(Q_{n_i}) \geqslant p_i  \), we have $$\mp(Q_{n_i}) \geqslant \frac{n_i}{2} + \frac{\log_2 n_i - 1}{2}.$$
\end{proof}

\subsection{Upper bound}
Suppose we have a finite family of sets with finite elements, and we intend to color the underlying set with two colors such that each subset has roughly half of each color. The discrepancy quantifies how unbalanced any set in the family can be under the best possible two-coloring of the underlying set. Formally, let $\mathcal{A}$ be a family of subsets of $\Omega$, and consider the coloring as a mapping $$\chi: \Omega \rightarrow \{-1,+1\}.$$ Suppose for every $A \subseteq \Omega$, $\chi(A)=\sum_{a \in A} \chi(a)$. Then, the discrepancy of $\mathcal{A}$ with respect to $\chi$ is defined as $$\disc(\mathcal{A}, \chi) = \max_{A \in \mathcal{A}} |\chi(A)|.$$ The discrepancy of $\mathcal{A}$ is defined as $$\disc(\mathcal{A}) = \min_{\chi: \Omega \rightarrow\{-1,+1\}} \disc(\mathcal{A}, \chi).$$ \\[0.5em]
Let's recall a classic result due to \citet{spencer1985six} from the discrepancy theory. 
\begin{theorem}\emph{\cite{spencer1985six}} \label{disc}
	Let $\mathcal{A}$ be a family of $n$ subsets of an $n$-element set $\Omega$. Then $$\disc(\mathcal{A}) \leqslant 6\sqrt{n}.$$
\end{theorem}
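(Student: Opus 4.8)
The plan is to reproduce Spencer's own \emph{partial coloring} argument, which is the standard route to the sharp constant: one isolates a single ``one-round'' lemma and then iterates it on a geometrically shrinking ground set.

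\textbf{The partial coloring lemma.} The crux is the following. There is an absolute constant $c$ such that for any sets $B_1,\dots,B_m\subseteq[N]$ and any reals $\Delta_1,\dots,\Delta_m>0$ with $\sum_{j=1}^m g\!\big(\Delta_j/\sqrt{|B_j|}\big)\le N/5$ --- where $g$ is a fixed decreasing function, bounded near $0$ and satisfying $g(\lambda)=O\!\big(e^{-\lambda^2/2}\big)$ up to polynomial factors, arising as the entropy of a rounded Gaussian --- there is a \emph{partial coloring} $\psi:[N]\to\{-1,0,+1\}$ with at least $N/2$ nonzero coordinates and $|\psi(B_j)|\le\Delta_j$ for every $j$. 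I would prove this by the entropy/pigeonhole method: draw $\chi$ uniformly from $\{-1,+1\}^N$ and let its \emph{signature} record, for each $j$, which length-$\Delta_j$ interval contains $\chi(B_j)$. Since $\chi(B_j)$ is a sum of $|B_j|$ independent signs, Hoeffding's tail $\Pr[|\chi(B_j)|\ge t\sqrt{|B_j|}]\le 2e^{-t^2/2}$ bounds the entropy of the $j$-th signature coordinate by $g(\Delta_j/\sqrt{|B_j|})$, so by subadditivity the whole signature has entropy $\le N/5$; hence some signature class $C\subseteq\{-1,+1\}^N$ has $|C|\ge 2^{N}\,2^{-N/5}$. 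Applying subadditivity of entropy once more to the uniform measure on $C$ forces most coordinate marginals of $C$ to be close to $\tfrac{1}{2}$, and a first-moment computation then produces two colorings $\chi\ne\chi'\in C$ differing in at least $N/2$ coordinates (this ``spread'' step is where the precise fraction, and ultimately the constant, gets squeezed). Then $\psi:=\tfrac{1}{2}(\chi-\chi')\in\{-1,0,+1\}^N$ works, since $\chi$ and $\chi'$ share a signature and so $|\psi(B_j)|=\tfrac{1}{2}|\chi(B_j)-\chi'(B_j)|\le\tfrac{1}{2}\Delta_j$.

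\textbf{Iteration.} Starting from the given $n$ sets on $n$ points, at round $t\ge 1$ there are $N_{t-1}\le n/2^{t-1}$ still-uncolored points carrying $n$ restricted sets, so the number of sets equals $2^{t-1}N_{t-1}$; taking all $\Delta_j=c\sqrt{N_{t-1}\,t}$ makes the hypothesis above hold (one checks $2^{t-1}N_{t-1}\,g(c\sqrt t)\le N_{t-1}/5$ for a suitable fixed $c$, using $g(\lambda)\lesssim e^{-\lambda^2/2}$), so at least half of the remaining points get colored while each set-sum moves by at most $c\sqrt{N_{t-1}\,t}\le c\sqrt{n\,t/2^{t-1}}$. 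After $O(\log n)$ rounds only $O(1)$ points remain; color them arbitrarily, costing $O(1)$. Summing the increments, the total discrepancy of the assembled full coloring is at most $c\sqrt n\sum_{t\ge1}\sqrt{t/2^{t-1}}+O(1)$, and since $\sum_{t\ge1}\sqrt{t/2^{t-1}}$ is an explicit convergent constant (a bit below $5$), a tight choice of $c$ together with careful handling of the lower-order terms brings this below $6\sqrt n$.

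\textbf{Main obstacle.} The genuinely hard ingredient is the partial coloring lemma --- concretely, the two entropy estimates: that the signature has entropy $\le N/5$, and that a heavy signature class is ``spread out'' enough to contain two colorings at Hamming distance $\ge N/2$. The qualitative statement $\disc(\mathcal A)=O(\sqrt n)$ already falls out with crude constants; obtaining exactly $6$ is an orthogonal, purely quantitative optimization (sharpening $g$, the fraction $1/5$, and the per-round budget), and this is precisely the content of ``six standard deviations suffice''. An alternative that also yields $O(\sqrt n)$ --- the Lovett--Meka Gaussian random walk confined to the slab $\{x:|\langle\mathbb{1}_{B_j},x\rangle|\le\Delta_j\ \forall j\}$ --- is constructive but does not readily deliver the sharp constant, so for the statement as written I would stay with Spencer's entropy argument.
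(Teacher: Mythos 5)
This statement is not proved in the paper at all: it is Spencer's ``six standard deviations suffice'' theorem, imported as a black-box citation of \cite{spencer1985six}, so there is no in-paper proof to compare against. Your proposal is a roadmap of the canonical proof (Spencer's entropy/partial-coloring argument with geometric iteration), and the architecture is the right one. But as a proof of the stated bound it has two genuine gaps. First, the partial coloring lemma --- which you correctly identify as the crux --- is only described, not proved: both key estimates (the signature entropy bound $\leqslant N/5$, and the existence of two signature-equivalent colorings at Hamming distance $\geqslant N/2$) are asserted. For the second of these, the mechanism you propose (``most coordinate marginals of $C$ are close to $\tfrac12$ plus a first-moment computation'') is not the standard argument and is not obviously sufficient; a set can have all marginals near $\tfrac12$ and still, a priori, need a separate diameter argument. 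The usual route is Kleitman's isoperimetric/diameter theorem, or a direct counting bound showing that any $C \subseteq \{-1,+1\}^N$ with $|C| \geqslant 2^{(1-1/5)N}$ cannot be contained in a Hamming ball of radius $N/4$. Without one of these, the lemma is unproved.

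Second, and more directly fatal to the statement as written: the constant $6$ is never actually obtained. Your final step rests on the numerical claim that $\sum_{t\geqslant 1}\sqrt{t/2^{t-1}}$ is ``a bit below $5$,'' but that sum is approximately $5.9$ (the first two terms alone contribute $2$), so the displayed computation gives roughly $c\cdot 5.9\sqrt{n}$ and forces $c$ to be essentially $1$, which the sketched entropy bookkeeping does not deliver without the additional optimizations you defer to ``careful handling of the lower-order terms.'' So the proposal establishes (modulo the partial coloring lemma) only $\disc(\mathcal{A}) = O(\sqrt{n})$ with an unspecified constant, not $6\sqrt{n}$. For this paper that weaker statement would actually suffice --- it would only change the constant in the upper bound of Theorem~\ref{r5} --- but it does not prove the theorem as stated, which is precisely the sharp-constant result and is best left as the citation the authors intended.
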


The next lemma establishes an upper bound on $\mp(Q_n)$ using this bound. 

\begin{lemma}\label{lemUB}
For any positive integer $n$, we have
$$\mp(Q_n) 
    \leqslant \frac{n}{2} + 6\sqrt{2 \mp(Q_n)}
$$
\end{lemma}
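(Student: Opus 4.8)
The plan is to apply Spencer's theorem (Theorem~\ref{disc}) to a cleverly chosen set system on an $n$-element ground set, where the ground set is the set of $n$ coordinate directions of $Q_n$ and the $n$ sets are derived from a maximum multipacking $M$ of $Q_n$. Let $M = \{v_1, \dots, v_m\}$ be a multipacking with $m = \mp(Q_n)$. The key idea is that distances in $Q_n$ are Hamming distances, so flipping the labelling of a coordinate (i.e.\ choosing a ``reference corner'') translates every vertex by a fixed vector, and the distance $d(u,v) = \operatorname{wt}(u \oplus v)$. If I pick a reference vertex $w \in \{0,1\}^n$ and re-coordinatise so that $w$ becomes $0^n$, then $\operatorname{wt}(v_i) = d(w, v_i)$. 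The ball condition around $w$ says that if I sort the $v_i$ by weight, the $j$-th smallest weight must be at least $j$ (this is exactly the multipacking condition $|N_k[w] \cap M| \le k$ for all $k$). So to get a contradiction with $m$ large, I want to choose $w$ so that all the weights $\operatorname{wt}(v_i)$ are close to $n/2$ — then they can't all be spread out enough, forcing $m \lesssim n/2 + (\text{spread})$.

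Concretely, here is the step order. First, build the set system: the ground set is $[n]$ (coordinates), and for each $i \in [n]$ associate to $v_i$ its support viewed as... no — better, apply the discrepancy bound to control the weights \emph{simultaneously}. Take $\Omega = [n]$ and let $A_i \subseteq [n]$ be the support of $v_i$ (the set of coordinates where $v_i$ is $1$), for $i = 1, \dots, m$. If $m \le n$ we have at most $n$ sets on an $n$-element ground set (pad with empty sets if $m < n$), so Spencer's theorem gives a colouring $\chi : [n] \to \{-1, +1\}$ with $|\chi(A_i)| \le 6\sqrt{n}$ for all $i$. Now $\chi$ corresponds to a vertex $w \in \{0,1\}^n$ (say $w_j = 1$ iff $\chi(j) = -1$), and a short computation gives $d(w, v_i) = \operatorname{wt}(w \oplus v_i) = \frac{1}{2}\bigl(n - \chi(A_i) + (\text{something involving } |A_i|)\bigr)$; the point is that $\chi(A_i) = |A_i| - |\overline{A_i} \cap A_i^c|$... let me just say: $d(w,v_i) = \frac{n}{2} - \frac{1}{2}\sum_{j \in A_i}\chi(j) \cdot(\pm) $, which after bookkeeping yields $\bigl|d(w,v_i) - \tfrac{n}{2}\bigr| \le 3\sqrt{n}$ for every $i$ — so all $m$ multipacking vertices lie in the annulus of radii $n/2 \pm 3\sqrt{n}$ around the single vertex $w$.

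Second, extract the numerical bound. Having placed all $m$ vertices of $M$ within $N_{\lfloor n/2 \rfloor + 3\sqrt n}[w]$, the multipacking condition applied at $v = w$ with $k = \lfloor n/2\rfloor + 3\sqrt n$ (rounding up appropriately) gives $m = |N_k[w] \cap M| \le k \le \tfrac n2 + 3\sqrt n$. To match the stated bound $\mp(Q_n) \le \frac n2 + 6\sqrt{2\mp(Q_n)}$, I instead apply Spencer to exactly $m = \mp(Q_n)$ sets on an $m$-element \emph{subset} of coordinates — choosing a minimal set of coordinates that still distinguishes... hmm, or more simply: restrict to any $m$ coordinates that contain the symmetric differences' supports after a preliminary pigeonhole reduction, getting discrepancy $6\sqrt{m}$, hence all weights within $\frac n2 \pm 3\sqrt{2m}$ of a reference (the factor $2$ and $\sqrt{}$ coming from the $\frac 12$ in converting discrepancy of supports to Hamming distance and from bounding both $|A_i|$-type terms), and then the same ball argument gives $\mp(Q_n) \le \frac n2 + 6\sqrt{2\mp(Q_n)}$.

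The main obstacle I anticipate is the bookkeeping in the second step: making the reduction from $n$ sets to $m = \mp(Q_n)$ sets on a ground set of the right size, so that the square-root term becomes $6\sqrt{2\mp(Q_n)}$ rather than $6\sqrt{2n}$ (the latter would be weaker but is what the final theorem states — so possibly the intended proof is cleaner and just uses all $n$ coordinates, with the self-referential bound in Lemma~\ref{lemUB} then solved for $\mp(Q_n)$ to give the $6\sqrt{2n}$ in Theorem~\ref{r5}). I would also need to double-check the exact constant when translating ``$|\chi(A_i)| \le 6\sqrt{n}$ for supports'' into ``$|d(w,v_i) - n/2| \le c\sqrt n$'': writing $d(w,v_i) = \tfrac12\bigl(\operatorname{wt}(v_i) + \operatorname{wt}(w) - \langle\text{overlap}\rangle\bigr)$-style identities carefully, or better, working directly with the vectors $\pm 1$-valued $v_i$ and letting $\chi$ be the sign vector, so that $d(w,v_i)$ is a linear function of $\langle \chi, v_i\rangle$ and the discrepancy bound applies verbatim. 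This is routine but is where an off-by-constant-factor error would creep in.
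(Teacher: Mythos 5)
Your overall strategy is the paper's strategy: apply Spencer's theorem to set systems built from the supports of the multipacking vertices, use the resulting colouring to re-coordinatise (flip bits / choose a reference vertex $w$) so that every vertex of the multipacking has weight close to $n/2$, and then invoke the multipacking condition on the single ball around $w$ to conclude $\mp(Q_n)\leqslant \frac n2+(\text{discrepancy})$. However, the step that actually produces the term $6\sqrt{2\mp(Q_n)}$ — which is the whole content of the lemma as stated, as opposed to the weaker $6\sqrt{n}$-type bound — is missing, and the substitute you propose does not work. You suggest ``restricting to an $m$-element subset of coordinates'' via some pigeonhole reduction. Since $m=\mp(Q_n)$ can be as small as roughly $n/2$, discarding $n-m$ coordinates destroys the very quantity you are bounding (the Hamming weight of each $v_i$ depends on all $n$ coordinates), and no such reduction is given. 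The paper's device is the opposite move: since the lower bound $p=\mp(Q_n)\geqslant \lfloor n/2\rfloor$ is already in hand, one has $2p\geqslant n$, so the family $\mathcal{A}=\{A_1,\overline{A_1},\dots,A_p,\overline{A_p}\}$ of $2p$ sets can be regarded as living on a ground set $\Omega=[2p]\supseteq[n]$; Spencer's theorem with parameter $2p$ then gives $\disc(\mathcal{A})\leqslant 6\sqrt{2p}$, and this is exactly where the self-referential $6\sqrt{2\mp(Q_n)}$ comes from.

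There is a second, smaller gap in your discrepancy-to-distance translation. With $w_j=1$ iff $\chi(j)=-1$, one computes $d(w,v_i)=\frac{|A_i|+\chi(A_i)}{2}+\frac{|\overline{A_i}|-\chi(\overline{A_i})}{2}$, so the deviation of $d(w,v_i)$ from $n/2$ is $\frac{\chi(A_i)-\chi(\overline{A_i})}{2}=\chi(A_i)-\frac{\chi([n])}{2}$. If your family contains only the supports $A_i$ (padded with empty sets), Spencer controls $\chi(A_i)$ but says nothing about $\chi([n])$, so your claimed bound $|d(w,v_i)-\frac n2|\leqslant 3\sqrt n$ does not follow. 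This is precisely why the paper puts both $A_i$ and $\overline{A_i}$ into the family: then each of the two fractions above is at most its set's half-size plus $\frac{\disc(\mathcal{A})}{2}$, giving weight at most $\frac n2+\disc(\mathcal{A})$ (note: the full discrepancy, not half of it). You flagged this bookkeeping as a place where a constant-factor error could creep in; it is also the place where the fix (adding the complements, which simultaneously doubles the number of sets to $2p$ and enables the ground-set enlargement above) lives, so the two gaps are resolved by the same construction.
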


\begin{proof}
Let $P=\{x_1, x_2, \dots, x_p\}$ be a maximum multipacking in $Q_n$ of size $p ~(\geqslant \frac{n}{2})$. For each $x_i \in P$, we construct a set $A_i \subseteq [n]$ comprising of the coordinates at which $x_i$ has $1$, and $\overline{A_i}= [n] \setminus A_i$. In particular, $\overline{A_i}$ contains the coordinate values at which $x_i$ has $0$. Now, consider the family of $2p$ sets $$\mathcal{A}=\{A_1, \overline{A_1}, A_2, \overline{A_2}, \dots, A_{p}, \overline{A_{p}}\},$$ with the underlying set $\Omega = [2p] \supseteq [n]$. Let $\chi$ be a mapping such that $\disc(\mathcal{A}, \chi) = \disc(\mathcal{A})$. For each $i \in [n]$, if $\chi(i) = -1$, then we flip the bit at the $i$-th coordinate for all the vertices of $Q_n$. All such flippings induce an automorphism $\varphi$ on the vertex set of $Q_n$. Hence, the set $P$ contains new vertices of $Q_n$. Further, a bit-flipping preserves the hamming distance, and therefore $P$ is still a multipacking in $Q_n$.
    
    Since, for each $i \in [n]$, $|\chi(A_i)|, |\chi(\overline{A_i})| \leqslant \disc(\mathcal{A})$, the number of ones in  $\varphi(x_i)$ is 
    at most $\left(\frac{|A_i|}{2} + \frac{\disc(\mathcal{A})}{2}\right) + \left(\frac{|\overline{A_i}|}{2} + \frac{\disc(\mathcal{A})}{2}\right)$ = $\frac{n}{2} +
	\disc(\mathcal{A})$. Hence $\operatorname{wt}(x_i) \leqslant \frac{n}{2} + \disc(\mathcal{A})$ for every $x_i \in P$ after flipping. 
	This means that, after flipping, $P \subseteq N_{\frac{n}{2} + \disc(\mathcal{A})}[0^n]$. As $P$ is still a multipacking, we have $$|P| \leqslant \frac{n}{2} + \disc(\mathcal{A}) \leqslant \frac{n}{2} + 6\sqrt{2p},$$ where the last inequality is due to Theorem \ref{disc}. As $P$ is a maximum multipacking in $Q_n$, we have the desired upper bound. 
\end{proof}

\noindent \prff~ The upper bound $\mp(Q_n) \leqslant \frac{n}{2} + 6\sqrt{2n}$ in Theorem~\ref{r5} now follows from Lemma~\ref{lemUB} since  $\mp(Q_n) \leqslant n$. \qed

\bibliographystyle{abbrvnat}
\bibliography{paper}

\begin{thebibliography}{13}
\providecommand{\natexlab}[1]{#1}
\providecommand{\url}[1]{\texttt{#1}}
\expandafter\ifx\csname urlstyle\endcsname\relax
  \providecommand{\doi}[1]{doi: #1}\else
  \providecommand{\doi}{doi: \begingroup \urlstyle{rm}\Url}\fi

\bibitem[Beaudou and Brewster(2019)]{brewster2019multipacking}
L.~Beaudou and R.~C. Brewster.
\newblock On the multipacking number of grid graphs.
\newblock \emph{Discret. Math. Theor. Comput. Sci.}, 21, 2019.

\bibitem[Beaudou et~al.(2018)Beaudou, Brewster, and
  Foucaud]{beaudou2018broadcast}
L.~Beaudou, R.~C. Brewster, and F.~Foucaud.
\newblock Broadcast domination and multipacking: bounds and the integrality
  gap.
\newblock \emph{Australas. J. Comb.}, 74\penalty0 (1):\penalty0 86--97, 2018.

\bibitem[Bre{\v{s}}ar and {\v{S}}pacapan(2009)]{brevsarproductgraph}
B.~Bre{\v{s}}ar and S.~{\v{S}}pacapan.
\newblock Broadcast domination of products of graphs.
\newblock \emph{Ars Comb.}, 92:\penalty0 303--320, 2009.

\bibitem[Brewster et~al.(2013)Brewster, Mynhardt, and Teshima]{brewster2013new}
R.~C. Brewster, C.~M. Mynhardt, and L.~E. Teshima.
\newblock New bounds for the broadcast domination number of a graph.
\newblock \emph{Cent. Eur. J. Math.}, 11\penalty0 (7):\penalty0 1334--1343,
  2013.

\bibitem[Brewster et~al.(2019)Brewster, MacGillivray, and
  Yang]{brewster2019broadcast}
R.~C. Brewster, G.~MacGillivray, and F.~Yang.
\newblock Broadcast domination and multipacking in strongly chordal graphs.
\newblock \emph{Discret. Appl. Math.}, 261:\penalty0 108--118, 2019.

\bibitem[Chartrand et~al.(2010)Chartrand, Lesniak, and
  Zhang]{chartrand2010graphs}
G.~Chartrand, L.~Lesniak, and P.~Zhang.
\newblock \emph{Graphs \& digraphs}, volume~39.
\newblock CRC press, 2010.

\bibitem[Das and Islam(2025)]{das2025multipacking}
S.~Das and S.~S. Islam.
\newblock Multipacking and broadcast domination on cactus graphs and its impact
  on hyperbolic graphs.
\newblock In \emph{International Conference and Workshops on Algorithms and
  Computation}, pages 111--126. Springer, 2025.

\bibitem[Das et~al.(2023)Das, Foucaud, Islam, and Mukherjee]{das2023relation}
S.~Das, F.~Foucaud, S.~S. Islam, and J.~Mukherjee.
\newblock Relation between broadcast domination and multipacking numbers on
  chordal graphs.
\newblock In \emph{Conference on Algorithms and Discrete Applied Mathematics},
  pages 297--308. Springer, 2023.

\bibitem[Erwin(2001)]{erwinintro}
D.~J. Erwin.
\newblock Cost domination in graphs.
\newblock \emph{Ph.D. Thesis, Department of Mathematics and Statistics, Western
  Michigan University}, 2001.

\bibitem[Hartnell and Mynhardt(2014)]{hartnell2014}
B.~L. Hartnell and C.~M. Mynhardt.
\newblock On the difference between broadcast and multipacking numbers of
  graphs.
\newblock \emph{Util. Math.}, 94:\penalty0 19--29, 2014.

\bibitem[Heggernes and Lokshtanov(2006)]{heggernes2006optimal}
P.~Heggernes and D.~Lokshtanov.
\newblock Optimal broadcast domination in polynomial time.
\newblock \emph{Discrete Math.}, 306\penalty0 (24):\penalty0 3267--3280, 2006.

\bibitem[Spencer(1985)]{spencer1985six}
J.~Spencer.
\newblock Six standard deviations suffice.
\newblock \emph{Trans. Am. Math. Soc}, 289\penalty0 (2):\penalty0 679--706,
  1985.

\bibitem[Teshima(2012)]{teshima2012broadcasts}
L.~E. Teshima.
\newblock Broadcasts and multipackings in graphs, 2012.
\newblock Master's Thesis, Department of Mathematics and Statistics, University
  of Victoria.

\end{thebibliography}

\end{document}